\numberwithin{equation}{section}
\newcommand{\udef}{\mathrel{\mathop:}=}
\newcommand{\R}{\mathbb{R}}
\newcommand{\N}{\mathbb{N}}
\newcommand{\1}{\mathds{1}}
\theoremstyle{plain}
\newtheorem{thm}{Theorem}[section]
\newtheorem{cor}[thm]{Corollary}
\newtheorem{prop}[thm]{Proposition}
\newtheorem{defn}[thm]{Definition}
\newtheorem{rem}[thm]{Remark}
\newtheorem{exm}[thm]{Example}
\newcommand{\bmat}[1]{ \begin{bmatrix}#1\end{bmatrix}}
\newcommand{\conv}{\mathrm{conv}}
\renewcommand{\ss}{\scriptstyle}
\def\svdots{\vbox{\baselineskip=1.5pt\lineskiplimit=0pt
	\kern1.5pt \hbox{$\ss .$}\hbox{$\ss .$}\hbox{$\ss .$}}}
\begin{document}

\title{Set-Valued Analysis of Generalized Barycentric Coordinates and Their Geometric Properties}
\author[Difonzo]{Fabio V. Difonzo}
\address{Istituto per le Applicazioni del Calcolo \textquotedblleft Mauro Picone\textquotedblright, Consiglio Nazionale delle Ricerche, Via G. Amendola 122/I, 70126 Bari, Italy}
\email{fabiovito.difonzo@cnr.it}

\subjclass{52A21, 26E25}

\keywords{Generalized Barycentric Coordinates; Convex Polytopes; Set-Valued Maps}

\null\hfill Version of \today $, \,\,\,$ \xxivtime

\begin{abstract}
Letting $P$ be a convex polytope in $\R^d$ with $n>d$ vertices, we study geometric and analytical properties of the set of generalized barycentric coordinates relative to any point $p\in P$. We prove that such sets are polytopes in $\R^n$ with at most $n-d-1$ vertices, and provide results about continuity and differentiability for the corresponding set-valued maps.
\end{abstract}

\maketitle

\pagestyle{myheadings}
\thispagestyle{plain}
\markboth{F.V. DIFONZO}{SET-VALUED ANALYSIS OF GENERALIZED BARYCENTRIC COORDINATES}

\section{Introduction}

Barycentric coordinates, first introduced by M\"{o}bius \cite{mobius1827barycentrische}, are nowadays useful tools in interpolation problems \cite{floater2003,floater_2015}, computational mechanics \cite{bathe}, computer graphics \cite{SchneiderEtAl2013} and differential equation theory \cite{dd}. We refer to \cite{HormannSukumar2018GBC} and all the references therein for a thorough source of theory and applications of generalized barycentric coordinates. In this work we attempt to study them as set-valued maps from convex $d$-polytopes with $n$ vertices, $n>d$, having values onto the unit cube $[0,1]^n$, and to provide some fundamental geometrical and analytical properties about them.

\section{Setting}

Let $n,d\in\N$, with $n>d$, $\{v_i\}_{i=1}^{n}\subseteq\R^d$ be a set of vertices and let
\[
V\udef\bmat{v_1 & \cdots & v_n}\in\R^{n\times d}.
\]
Defining $\1$ to be the vector whose components are all equal to one, and assuming that $\dim\ker\bmat{V \\ \1^\top}=n-d-1$, then there exists $N\in\R^{n\times(n-d-1)}$ such that
\begin{equation}\label{eq:N}
\langle N\rangle=\ker\bmat{V \\ \1^\top}.
\end{equation}
Let $P\udef\conv V$ be the convex polytope whose vertices are given by the columns of $V$.
\begin{defn}\label{def:gbc}
Let $p\in P$. A vector $\lambda(p)\in\R^n$ is a set of generalized barycentric coordinates for $p$ if it is a componentwise nonnegative solution to the linear system
\begin{equation}\label{eq:gbc}
\bmat{V \\ \1^\top}\lambda=\bmat{p \\ 1}.
\end{equation}
\end{defn}
When $\lambda\geq0$ componentwise, we will also say that $\lambda$ is feasible. It comes from the definition that, for each $p\in P$ there always exists a set of feasible generalized barycentric coordinates $\tau(p)\geq0$. More precisely, since \eqref{eq:gbc} is full rank, it is meaningful to define the set, depending on $p$,
\begin{equation}\label{eq:Gamma}
\Gamma(p)\udef\{c\in\R^{n-d-1}\,:\,\tau(p)+Nc\geq0\},\quad p\in P.
\end{equation}
Thus, we can consider the following set-valued map
\begin{align*}
\Gamma:P &\rightrightarrows \R^{n-d-1} \\
p &\mapsto \Gamma(p)
\end{align*}
It then follows that the entire set of feasible barycentric coordinates relative to $p\in P$ can be expressed by the set-valued map
\begin{equation}\label{eq:Lambda}
\begin{aligned}
\Lambda:P &\rightrightarrows [0,1]^n \\
p &\mapsto \tau(p)+N\Gamma(p).
\end{aligned}
\end{equation}

We prove that $\Lambda(p)$, and hence $\Gamma(p)$, do not depend on the particular $\tau(p)$ chosen. In fact, if $\tau'(p)$ is another feasible set of barycentric coordinates for $p$, then there exists $c'\in\Gamma(p)$ such that
\[
\tau(p)=\tau'(p)+Nc',
\]
and therefore $\Lambda(p)=\tau'(p)+Nc'+N\Gamma(p)=\tau'(p)+N\Gamma(p)$. \\
Let us also notice that for each $c\in\Gamma(p)$ there exists a unique $\lambda\in\Lambda(p)$ such that $\lambda(p)=\tau(p)+Nc$ and, thus, we have a bijective correspondence between the two sets $\Gamma(p)$ and $\Lambda(p)$.

It is easily seen that, from \eqref{eq:gbc},
\[
\Lambda(p)=\left\{\lambda\in\R^n\,:\,\bmat{V \\ \1^\top}\lambda=\bmat{p \\ 1},\,\lambda\geq0\right\},
\]
where, hereafter, $\lambda\geq0$ is meant to hold componentwise. \\
This paper is devoted to study geometric and analytical properties of the set-valued maps $\Gamma(p)$ and $\Lambda(p)$. \\
In Section \ref{sec:geomProp} we prove that $\Lambda(p)$ is a bounded polytope, and we describe its vertices.
In Section \ref{sec:anProp} we study smoothness of these maps.

\section{Geometric structure of $\Lambda(p)$ and $\Gamma(p)$}\label{sec:geomProp}

In this section we state and prove fundamental results about the geometric features of $\Lambda(p)$ and $\Gamma(p)$. Such results will be useful for investigating analytical properties of the set-valued maps introduced above.

It holds the following.
\begin{prop}\label{prop:polytopes}
For each $p\in P$, $\Gamma(p)$ and $\Lambda(p)$ are polytopes of dimension at most $n-d-1$.
\end{prop}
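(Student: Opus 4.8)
The plan is to establish the claim first for $\Gamma(p)$ and then to transport it to $\Lambda(p)$ through the affine bijection already recorded above.

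First I would observe that $\Gamma(p) = \{c \in \R^{n-d-1} : Nc \geq -\tau(p)\}$ is, by its very definition, the intersection of exactly $n$ closed half-spaces of $\R^{n-d-1}$, hence a convex polyhedron. By the Minkowski--Weyl theorem a bounded polyhedron is a polytope, so the whole task reduces to proving that $\Gamma(p)$ is bounded.

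For boundedness I would pass through $\Lambda(p)$. Every $\lambda \in \Lambda(p)$ satisfies $\1^\top\lambda = 1$ and $\lambda \geq 0$, so $0 \leq \lambda_i \leq 1$ componentwise and therefore $\Lambda(p) \subseteq [0,1]^n$ is bounded. Since $\langle N\rangle = \ker\bmat{V \\ \1^\top}$ has dimension $n-d-1$, the matrix $N$ has full column rank; the affine map $c \mapsto \tau(p) + Nc$ is thus injective and admits a left inverse, e.g. the Moore--Penrose pseudoinverse $N^+$. Writing $c = N^+(\lambda - \tau(p))$ exhibits $\Gamma(p) = N^+(\Lambda(p) - \tau(p))$ as the image of a bounded set under a linear map, hence bounded. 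This makes $\Gamma(p)$ a bounded polyhedron, i.e. a polytope, and since $\Gamma(p) \subseteq \R^{n-d-1}$ its dimension is automatically at most $n-d-1$.

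Finally I would deduce the statement for $\Lambda(p) = \tau(p) + N\Gamma(p)$, which is the image of the polytope $\Gamma(p)$ under the injective affine map $c \mapsto \tau(p)+Nc$: the affine image of a polytope is again a polytope, and injectivity preserves dimension, so $\Lambda(p)$ is a polytope with $\dim\Lambda(p) = \dim\Gamma(p) \leq n-d-1$. (The dimension bound for $\Lambda(p)$ can also be read off directly, since $\Lambda(p)$ lies inside the affine solution set of the full-rank system $\bmat{V \\ \1^\top}\lambda = \bmat{p \\ 1}$, whose dimension is $n-d-1$.) The main obstacle is the boundedness step, since a polyhedron cut out by $n$ inequalities in $\R^{n-d-1}$ need not be bounded in general; the essential ingredient is that the normalization $\1^\top\lambda=1$ together with $\lambda\geq 0$ traps $\Lambda(p)$ in the unit cube, a containment then pulled back to $\Gamma(p)$ precisely because $N$ has full column rank.
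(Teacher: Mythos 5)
Your proof is correct, but it reaches the conclusion by a genuinely different route than the paper. The paper establishes that $\Gamma(p)$ is a polytope via polar duality: it notes $0\in\Gamma(p)$, invokes $\Gamma(p)^{\circ\circ}=\Gamma(p)$ and the fact that the polar of a polyhedron is a polyhedron, and then obtains the dimension bound for $\Lambda(p)$ by applying Carath\'{e}odory's theorem to an affine base $\{c_1,\dots,c_{n-d}\}$ of $\Gamma(p)$ and pushing it forward through $c\mapsto\tau(p)+Nc$. You instead isolate boundedness as the crux and prove it directly: the normalization $\1^\top\lambda=1$ together with $\lambda\geq0$ traps $\Lambda(p)$ in $[0,1]^n$, and since $N$ has full column rank (its $n-d-1$ columns span the $(n-d-1)$-dimensional kernel), the left inverse $N^+$ pulls this bound back to $\Gamma(p)$; Minkowski--Weyl then upgrades the bounded polyhedron to a polytope, and the dimension bounds fall out of the ambient dimension of $\R^{n-d-1}$ and the injectivity of $c\mapsto\tau(p)+Nc$. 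Your version has the advantage of making the boundedness mechanism explicit --- the paper's bipolar argument, as stated, does not by itself rule out unboundedness, since the bipolar of an unbounded closed convex polyhedron containing the origin is again that unbounded polyhedron --- whereas the paper's Carath\'{e}odory step has the side benefit of already exhibiting $\Lambda(p)$ as the convex hull of $n-d$ points, which is reused in Theorem \ref{thm:LambdaConvVertices}. Both arguments are standard and complete for the statement at hand.
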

\begin{proof}
From \eqref{eq:Gamma} $\Gamma(p)$ is a closed convex polyhedron and, since $\tau(p)$ can always be chosen such that $\tau(p)\geq0$, $0\in\Gamma(p)$, therefore providing $\Gamma(p)^{\circ\circ}=\Gamma(p)$, where $\Gamma(p)^\circ$ is the polar of $\Gamma(p)$. From classical results (e.g., see \cite{bar,grunbaum2003convex}), $\Gamma(p)^\circ$ is a polyhedron, so that $\Gamma(p)^{\circ\circ}$ is a polytope, and so is $\Gamma(p)$. Since $\Lambda(p)=\tau(p)+N\Gamma(p)$, then $\Lambda(p)$ is a polytope as well. \\
Thus $\Gamma(p)$ is a convex, and hence affine, subspace of $\R^{n-d-1}$, and so $\dim\Gamma(p)\leq n-d-1$. By Caratheodory's Theorem (\cite{bar}) let $\{c_1,\ldots,c_{n-d}\}$ be an affine base for $\Gamma(p)$.

Given an arbitrary $\lambda\in\Lambda(p)$, it can be written as $\lambda=\tau+Nc$ for some unique $c\in\Gamma(p)$. Therefore, there exist $\alpha_i\geq0, i=1,\ldots,n-d$, such that
\[
c=\sum_{i=1}^{n-d}\alpha_ic_i,\quad\sum_{i=1}^{n-d}\alpha_i=1.
\]
Letting $\lambda_i\udef\tau+Nc_i, i=1,\ldots,n-d$, it follows that $\lambda_i\in\Lambda(p)$ and
\[
\lambda=\sum_{i=1}^{n-d}\alpha_i\lambda_i.
\]
Thus $\Lambda(p)\subseteq\conv\{\lambda_i\,:\,i=1,\ldots,n-d\}$, implying $\dim\Lambda(p)\leq n-d-1$.
\end{proof}

\begin{rem}\label{rem:polytopes}
If $p\in\partial P$ and since facet reducing property is shared by all feasible barycentric coordinates, then $\dim\Lambda(p)=0$. \\
If $p\in\mathrm{int}P$ then, by Caratheodory's Theorem, every $\lambda\in\Lambda(p)$ can be written as the convex combination of $n-d$ elements in $\Lambda(p)$.
\end{rem}

\begin{cor}
$\Gamma:P\rightrightarrows\R^{n-d-1}$ and $\Lambda:P\rightrightarrows[0,1]^n$ have compact convex values.
\end{cor}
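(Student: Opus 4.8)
The plan is to derive the corollary directly from Proposition \ref{prop:polytopes}, which already establishes that $\Gamma(p)$ and $\Lambda(p)$ are polytopes for each $p\in P$; since a polytope is by construction a compact convex set, almost all of the work is already in place. I would nonetheless spell out the two properties separately, so that the deduction is transparent and does not rely only on a blanket appeal to ``polytopes are compact and convex.''

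For convexity, I would observe that $\Gamma(p)$ as given by \eqref{eq:Gamma} is the intersection of the $n$ closed half-spaces $\{c\in\R^{n-d-1}:(\tau(p)+Nc)_j\geq0\}$, $j=1,\ldots,n$, and an arbitrary intersection of convex sets is convex. Then $\Lambda(p)=\tau(p)+N\Gamma(p)$ is the image of the convex set $\Gamma(p)$ under the affine map $c\mapsto\tau(p)+Nc$; since affine maps preserve convexity, $\Lambda(p)$ is convex as well.

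For compactness the crucial point is boundedness. For $\Lambda(p)$ this is immediate from \eqref{eq:gbc}: every $\lambda\in\Lambda(p)$ satisfies $\lambda\geq0$ together with $\1^\top\lambda=1$, which forces each component into $[0,1]$, so that $\Lambda(p)\subseteq[0,1]^n$; being the intersection of the closed affine subspace defined by the equality constraint with the closed positive orthant, $\Lambda(p)$ is also closed, hence compact in $\R^n$. For $\Gamma(p)$ I would transfer compactness across the matrix $N$: since by \eqref{eq:N} the columns of $N$ form a basis of $\ker\bmat{V \\ \1^\top}$, the linear map $c\mapsto Nc$ is injective, and its corestriction to $\langle N\rangle$ is a linear homeomorphism. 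As $N\Gamma(p)=\Lambda(p)-\tau(p)$ is compact, its preimage $\Gamma(p)$ under this homeomorphism is compact too.

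I do not expect a genuine obstacle, precisely because Proposition \ref{prop:polytopes} has already done the substantive work of ruling out unboundedness of $\Gamma(p)$, a set defined by inequalities alone and therefore a priori only a (possibly unbounded) polyhedron. If one wished to avoid invoking the polytope structure outright, the single delicate point would be exactly this boundedness of $\Gamma(p)$, which the argument above recovers from the injectivity of $N$ and the manifest boundedness of $\Lambda(p)$ inside the unit cube.
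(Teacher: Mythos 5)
Your proof is correct. The paper disposes of this corollary in one line---``it is a direct consequence of Proposition \ref{prop:polytopes}''---relying on the fact that a polytope is by definition a compact convex set, and you open with the same observation. What you add is a genuinely different, self-contained verification that never actually needs the polytope structure: convexity of $\Gamma(p)$ as an intersection of closed half-spaces and of $\Lambda(p)$ as its affine image; boundedness and closedness of $\Lambda(p)$ directly from $\lambda\geq0$ and $\1^\top\lambda=1$; and compactness of $\Gamma(p)$ pulled back through the injective map $c\mapsto Nc$ (injective precisely because the columns of $N$ are a basis of $\ker\smat{V \\ \1^\top}$). This is more elementary than the paper's route, since Proposition \ref{prop:polytopes} itself reaches boundedness of $\Gamma(p)$ through a polarity argument ($\Gamma(p)^{\circ\circ}=\Gamma(p)$), whereas your transfer-of-compactness argument obtains it from the manifest containment $\Lambda(p)\subseteq[0,1]^n$; indeed, your argument could be run \emph{before} Proposition \ref{prop:polytopes} and used to simplify its proof of boundedness. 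The only cost is length: for readers content to accept ``polytopes are compact and convex,'' the paper's one-line citation suffices.
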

\begin{proof}
It is a direct consequence of Proposition \ref{prop:polytopes}.
\end{proof}
Next, we want to describe the vertices of the $\Lambda(p)$ as a polytope. We prove that they are given by the simplicial barycentric coordinates that are feasible at $p$, as defined below. In order to prove this, we introduce some notation.

For each $i=1,\ldots,n$, let
\[
\mathcal{I}_i\udef\{i,i+1,\ldots,i+n-d-2\},
\]
where indices are considered circularly (i.e., $n+1\equiv n$ and so on). Let us observe that $|\mathcal{I}_i|=n-d-1$. Let then $\sigma^i(p)$ be the unique solution to
\begin{equation}\label{eq:VISigma}
\bmat{V \\ \1^\top \\ I_i}\sigma=\bmat{p \\ 1 \\ 0},
\end{equation}
where $I_i$ is a diagonal matrix such that
\[
I_i(j,j)\udef
\begin{cases}
1, & j\in\mathcal{I}_i, \\
0, & j\notin\mathcal{I}_i.
\end{cases}
\]
As $\{v_i\,:\,i=1,\ldots,n\}$ are such that $\dim\ker\bmat{V \\ \1^\top}=n-d-1$, it follows that $\bmat{V \\ \1^\top \\ I_i}$ is invertible, and thus \eqref{eq:VISigma} has a unique solution $\sigma^i(p)$, which we refer to as \emph{simplicial barycentric coordinates} for $p$. It is straightforward to realize that
\begin{align*}
\sigma^i_j(p) &=0,\,\,\textrm{ if }j\in\mathcal{I}_i, \\
\sigma^i_j(p) &\neq0,\,\,\textrm{ if }j\notin\mathcal{I}_i.
\end{align*}
It holds the following
\begin{thm}\label{thm:LambdaConvVertices}
Let $p\in P$. Then there exists $I(p)\subseteq\{1,\ldots,n\}$ such that $|I(p)|=n-d$ and $\sigma^i(p)\geq0$ is a vertex of $\Lambda(p)$ for all $i\in I(p)$. Therefore
\[
\Lambda(p)=\conv\{\sigma^i(p)\,:\,i\in I(p)\}.
\]
\end{thm}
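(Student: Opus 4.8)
The plan is to realize $\Lambda(p)$ as the feasible region of a linear program in standard form and to use the classical description of its vertices as basic feasible solutions. Set $A\udef\bmat{V \\ \1^\top}$ and $b\udef\bmat{p \\ 1}$, so that $\Lambda(p)=\{\lambda\ge0:A\lambda=b\}$ with $A$ of full row rank $d+1$. A point $\lambda\in\Lambda(p)$ is then a vertex if and only if the columns $\{A_j:\lambda_j>0\}$ are linearly independent; in particular each vertex has at most $d+1$ nonzero entries, so its zero set $Z(\lambda)\udef\{j:\lambda_j=0\}$ has at least $n-d-1$ elements.

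The easy inclusion is that every feasible $\sigma^i(p)$ is a vertex. By construction $\sigma^i(p)$ vanishes on $\mathcal{I}_i$ and solves $A\sigma=b$, and the invertibility of $\bmat{V \\ \1^\top \\ I_i}$ assumed after \eqref{eq:VISigma} is exactly the statement that the $d+1$ columns of $A$ indexed by $\{1,\dots,n\}\setminus\mathcal{I}_i$ are linearly independent. Hence, putting $I(p)\udef\{i:\sigma^i(p)\ge0\}$, each $\sigma^i(p)$ with $i\in I(p)$ is a basic feasible solution and therefore a vertex of $\Lambda(p)$.

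For the reverse inclusion I would take an arbitrary vertex $\lambda$ of $\Lambda(p)$ and argue that it equals some $\sigma^i(p)$. Since $|Z(\lambda)|\ge n-d-1$, the decisive point is to show that $Z(\lambda)$ contains one of the circular blocks $\mathcal{I}_i$, i.e. that $n-d-1$ of the vanishing indices of $\lambda$ can be taken consecutively in the cyclic order of the vertices. Granting this, $\lambda$ vanishes on $\mathcal{I}_i$ while satisfying $A\lambda=b$, so the uniqueness of the solution of \eqref{eq:VISigma} forces $\lambda=\sigma^i(p)$ and hence $i\in I(p)$. Together with the previous paragraph this identifies the vertex set of $\Lambda(p)$ with $\{\sigma^i(p):i\in I(p)\}$, and since a polytope is the convex hull of its vertices we obtain $\Lambda(p)=\conv\{\sigma^i(p):i\in I(p)\}$. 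For the count, when $p\in\mathrm{int}P$ the polytope $\Lambda(p)$ is $(n-d-1)$-dimensional and thus has at least $n-d$ vertices, while the argument in Proposition \ref{prop:polytopes} exhibits $\Lambda(p)$ as the convex hull of $n-d$ points and so bounds the number of vertices above by $n-d$; combining the two gives $|I(p)|=n-d$, the boundary case $p\in\partial P$ being governed by Remark \ref{rem:polytopes}.

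I expect the consecutiveness claim of the third paragraph, together with the sharp count $|I(p)|=n-d$, to be the main obstacle: there is no formal reason for the $n-d-1$ vanishing coordinates of a generic vertex to form a single cyclic block $\mathcal{I}_i$, so this must be read off from the convex position of the $v_i$. Concretely, I anticipate needing a Gale--evenness--type combinatorial restriction on which $(d+1)$-subsets of vertices can span a simplex containing $p$, and then reconciling it with the dimension bound $\dim\Lambda(p)\le n-d-1$ of Proposition \ref{prop:polytopes} to conclude simultaneously that every vertex support is a consecutive arc and that exactly $n-d$ such arcs are feasible.
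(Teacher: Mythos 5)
Your LP framing is sound, and the half of the argument you complete is essentially the paper's: identifying $\Lambda(p)$ with $\{\lambda\ge0:A\lambda=b\}$ for the full-row-rank $A=\bmat{V\\ \1^\top}$ makes every feasible $\sigma^i(p)$ a basic feasible solution, hence a vertex. The paper reaches the same conclusion by verifying extremality directly: if $\sigma^i(p)=(1-a)\lambda+a\mu$ with $\lambda,\mu\in\Lambda(p)$ and $a\in(0,1)$, nonnegativity forces $\lambda_j=\mu_j=0$ for all $j\in\mathcal{I}_i$, so both $\lambda$ and $\mu$ solve the nonsingular system \eqref{eq:VISigma} and equal $\sigma^i(p)$.

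The gap is exactly where you flag it, and it is not a deferrable technicality: the ``consecutiveness claim'' is false in general, so the reverse inclusion cannot be closed along your route. A vertex of $\Lambda(p)$ is an arbitrary basic feasible solution, i.e.\ it corresponds to any set of at most $d+1$ affinely independent vertices $v_j$ whose convex hull contains $p$, and nothing forces the complement of its support to be a cyclic block $\mathcal{I}_i$. Concretely, for a regular hexagon ($n=6$, $d=2$) with $p$ its center, $\lambda=(\tfrac13,0,\tfrac13,0,\tfrac13,0)$ is a basic feasible solution whose zero set $\{2,4,6\}$ contains no run of $n-d-1=3$ consecutive indices; worse, none of the triangles $S_i=\conv\{v_j:j\notin\mathcal{I}_i\}$ spanned by three consecutive hexagon vertices contains the center, so no Gale-type refinement will rescue the claim without changing the family of index sets. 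For comparison, the paper avoids your third paragraph entirely: it counts that exactly $n-d$ of the $\sigma^i(p)$ are feasible (via a contradiction argument about simplices containing $p$), shows each feasible one is extreme, and then appeals to Krein--Milman together with the $(n-d)$-point hull bound from Proposition \ref{prop:polytopes}. But note that this route, too, never rules out extreme points with non-consecutive support such as the one above, so the step you correctly identify as the crux --- that \emph{every} vertex of $\Lambda(p)$ is some $\sigma^i(p)$ --- is the place where any complete proof must do real work, most likely by enlarging the candidate supports beyond the blocks $\mathcal{I}_i$ or by imposing extra hypotheses on the position of $p$ relative to the $v_i$.
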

\begin{proof}
From Proposition \ref{prop:polytopes}, it suffices to prove that there exist at most, and thus exactly by Caratheodory's Theorem, $n-d$ feasible simplicial barycentric coordinates $\sigma^i(p)$ at $p$. Proceeding by contradiction, let us suppose that for any $\overline n>n-d$ and for any $i=1,\ldots,\overline{n}$, there is some $j_i=1,\ldots,n$ such that $\sigma^i_{j_i}(p)<0$, where $\sigma^i(p)$ is the simplicial barycentric coordinates for $p$ defined in \eqref{eq:VISigma}. This implies that
\[
p\notin S_i\udef\conv\{v_j\,:\,j\notin\mathcal{I}_i\},
\]
for all $i=1,\ldots,\overline n$. Let us note that each $S_i$ is a $d$-simplex. This contradicts the fact that the polytope $P$ has at least $n-d$ simplices (see \cite{Edmonds1970,grunbaum2003convex,Ziegler1995Lectures}). \\
Let then $I(p)\subseteq\{1,\ldots,n\}$ be such that $|I(p)|=n-d$ and $\sigma^i(p)\geq0$ for all $i\in I(p)$, and let us now prove that $\sigma^i(p)$ are vertices of $\Lambda(p)$. In fact, let $i\in I(p)$ be given and let $\lambda,\mu\in\Lambda(p)$ and $a\in(0,1)$ be such that
\[
\sigma^i(p)=(1-a)\lambda+a\mu.
\]
Therefore, for all $j\in\mathcal{I}_i$,
\[
0=(1-a)\lambda_j+a\mu_j,
\]
so that, by nonnegativity, it follows that $\lambda_j=\mu_j=0$, and thus $\lambda,\mu$ both solve \eqref{eq:VISigma}. Since it is nonsingular, we deduce $\lambda=\mu=\sigma^i(p)$, implying that $\sigma^i(p)$ is a vertex of $\Lambda(p)$. Resorting to Krein-Milman Theorem proves the claim.
\end{proof}

\begin{exm}\label{ex:quadR2}
Let us consider a convex quadrilateral $Q\subseteq\R^2$ defined as
\[
Q=\conv\{v_i\,:\,i=1,2,3,4\},
\]
where the $v_i$'s are assumed to be in general position. In this case, letting $p\in Q$, Theorem \ref{thm:LambdaConvVertices} implies that $\Lambda(p)$ is a segment in $\R^4$, whose vertices are given by the triangular barycentric coordinates nonnegative at $p$. In other words, assuming without loss of generality that $p\in\conv\{v_1,v_2,v_3\}\cap\conv\{v_1,v_2,v_4\}$, and letting $\tau^4(p), \tau^3(p)$ be the unique solutions to
\[
\bmat{V \\ \1^\top \\ \begin{matrix} 0 & 0 & 0 & 1 \end{matrix}}\tau^4(p)=\bmat{p \\ 1 \\ 0},
\bmat{V \\ \1^\top \\ \begin{matrix} 0 & 0 & 1 & 0 \end{matrix}}\tau^3(p)=\bmat{p \\ 1 \\ 0}
\]
respectively, then $\Lambda(p)=\conv\{\tau^3(p),\tau^4(p)\}$.

In fact, here $N\in\R^4$ and it is easily seen that for any $\lambda(p)\in\Lambda(p)$ there exists a unique $c\in[a,b]$ such that
\[
\lambda(p)=\tau^4(p)+cN,
\]
where we define $a\udef\max\left\{-\frac{\tau^4_i(p)}{N_i}\,:\,N_i>0\right\}, b\udef\min\left\{-\frac{\tau^4_i(p)}{N_i}\,:\,N_i<0\right\}$ (see \cite{dd2}). It is completely evident that one between $a,b$ must be zero: assuming it to be $a$, then $b>0$; thus $c=a$ provides $\lambda(p)=\tau^4(p)$, while $c=b$ provides $\lambda(p)=\tau^3(p)$. Therefore, since $\tau^4(p)-\tau^3(p)\in\langle N\rangle$, we deduce that $\lambda(p)\in\conv\{\tau^3(p),\tau^4(p)\}$, as expected. If $b=0$, then it is enough, without loss of generality, to consider $-N$ instead of $N$ in the arguments above.
\end{exm}

\begin{exm}\label{ex:pentaR2}
For pentagons or polytopes with more than five vertices in $\R^2$, a similar idea has been explored in \cite{ANISIMOV2017}, where authors propose to consider a suitably blended average of barycentric coordinates relative to a given triangularization of the polytope. As a matter of fact, Theorem \ref{thm:LambdaConvVertices} provides a theoretical justification to the blending approach.

More explicitly, let us consider a convex pentagon $P\subseteq\R^2$ defined as
\[
P\udef\conv\{v_i\,:\,i=1,\ldots,5\},
\]
with $\dim\ker\bmat{V \\ \1^\top}=2$. Let now $\mathcal{T}_{ijk}\udef\conv\{v_l\,:\,l=i,j,k\}$ and $\tau_{ijk}(p)$ be its corresponding triangular barycentric coordinates relative to $p\in\mathcal{T}_{123}\cap\mathcal{P}_{124}\cap\mathcal{P}_{125}$. Thus, $\tau_{123},\tau_{124},\tau_{125}$ are, respectively, the unique solutions to
\[
\bmat{V \\ \1^\top \\ \begin{matrix} 0 & 0 & 0 & 1 & 0 \end{matrix} \\ \begin{matrix} 0 & 0 & 0 & 0 & 1 \end{matrix}}\tau=\bmat{p \\ 1 \\ 0 \\ 0},
\bmat{V \\ \1^\top \\ \begin{matrix} 0 & 0 & 1 & 0 & 0 \end{matrix} \\ \begin{matrix} 0 & 0 & 0 & 0 & 1 \end{matrix}}\tau=\bmat{p \\ 1 \\ 0 \\ 0},
\bmat{V \\ \1^\top \\ \begin{matrix} 0 & 0 & 1 & 0 & 0 \end{matrix} \\ \begin{matrix} 0 & 0 & 0 & 1 & 0 \end{matrix}}\tau=\bmat{p \\ 1 \\ 0 \\ 0}.
\]
It is easy to see that $\tau_{123},\tau_{124},\tau_{125}$ are linearly independent and $\tau_{124}(p)-\tau_{123}(p),\tau_{125}-\tau_{123}$ span $\ker\bmat{V \\ \1^\top}$. Thus, every feasible generalized barycentric coordinates at $p$, say $\lambda(p)$, can be written as
\[
\lambda(p)=\tau_{123}(p)+c(\tau_{124}(p)-\tau_{123}(p))+\overline{c}(\tau_{125}(p)-\tau_{123}(p)),
\]
for some $c,\overline{c}\in\R$, which implies
\[
\lambda(p)=(1-c-\overline{c})\tau_{123}(p)+c\tau_{124}(p)+\overline{c}\tau_{125}(p).
\]
However, since $\lambda(p)\geq0$ component-wise and because of the structure of $\tau_{ijk}(p)$, we get that $c,\overline{c}\in[0,1]$, so that $\lambda(p)$ is a convex combination of $\tau_{123},\tau_{234},\tau_{345}$. Therefore $\Lambda(p)=\conv\{\tau_{123},\tau_{234},\tau_{345}\}$, which configures a triangle in $\R^5$, thus confirming Theorem \ref{thm:LambdaConvVertices}.
\end{exm}

\begin{exm}\label{ex:pyrR3}
Let us consider a pyramid $P$ in $\R^3$ defined as
\[
P\udef\conv\{v_i\,:\,i=1,\ldots,5\},
\]
with $\dim\ker\bmat{V \\ \1^\top}=1$. This case is analogous to the quadrilateral in Example \ref{ex:quadR2}, and again Theorem \ref{thm:LambdaConvVertices} holds straightforwardly. \\
Let us now consider a prism $P$ in $\R^3$  defined as
\[
P\udef\conv\{v_i\,:\,i=1,\ldots,8\},
\]
with $\dim\ker\bmat{V \\ \1^\top}=4$. In this case, it is immediate to observe that the barycentric coordinates $\sigma_{ijkl}$ relative to the simplices $\conv\{v_m\,:\,m=i,j,k,l\}$, with $ijkl\in I\udef\{1235,1236,1237,1238,1245\}$, span $\ker\bmat{V \\ \1^\top}$ and thus, again resorting to arguments similar as in Example \ref{ex:pentaR2} by counting and exploiting the zero structure of such barycentric coordinates, we deduce that $\Lambda(p)=\conv\{\sigma_{ijkl}\,:\,ijkl\in I\}$.
\end{exm}

Let us notice that the vertices of $\Lambda(p)$ are strictly related to $p\in P$, and thus the set $I(p)$ is expected to vary with $p$. In fact, it is a simple observation that if $p\in\partial S_i$ for some $i=1,\ldots,n$, then there must exist some $j\notin I(p)$ such that $p\in\partial S_j$: for such a point, then, $\sigma^i(p)=\sigma^j(p)$. \\ 
Such a continuity of $\Lambda(p)$ is related to the selection problem, i.e. all the possible sets of generalized barycentric coordinates. Because of \eqref{eq:Lambda}, continuity of $\Gamma(p)$ and $\Lambda(p)$ imply each other, provided that $\tau(p)$ in \eqref{eq:Lambda} is smooth enough. Since $\Lambda(p)$ does not depend on the particular $\tau(p)$ chosen and simplicial barycentric coordinates are always smooth and feasible as long as $p$ belongs to their corresponding simplices, we will always assume $\tau(p)$ to be as smooth as needed. We are going to further deepen this topic in the next section.

\section{Analytical Properties of $\Lambda(p)$ and $\Gamma(p)$}\label{sec:anProp}

In this section we investigate smoothness of the generalized barycentric coordinates maps.

The following results describe some analytical properties of $\Gamma(p)$ and $\Lambda(p)$.

\begin{prop}\label{prop:LambdaUSC}
$\Lambda:P\rightrightarrows[0,1]^n$ is upper semi-continuous.
\end{prop}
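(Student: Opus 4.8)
The plan is to establish upper semi-continuity through the closed-graph characterization, exploiting that $\Lambda$ takes values in the compact set $[0,1]^n$. Recall that for a set-valued map into a metric space whose range is contained in a compact set (equivalently, a locally bounded map with closed values), having a closed graph is equivalent to upper semi-continuity. Since $\Lambda:P\rightrightarrows[0,1]^n$ is by construction uniformly bounded, it suffices to show that its graph
\[
\mathrm{gr}(\Lambda)=\left\{(p,\lambda)\in P\times[0,1]^n\,:\,\bmat{V \\ \1^\top}\lambda=\bmat{p \\ 1},\ \lambda\geq0\right\}
\]
is closed.

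Closedness of the graph is immediate from the characterization of $\Lambda(p)$ recorded earlier. First I would take any sequence $(p_k,\lambda_k)\in\mathrm{gr}(\Lambda)$ converging to some $(p,\lambda)\in P\times[0,1]^n$. Each pair satisfies the linear equality $\bmat{V \\ \1^\top}\lambda_k=\bmat{p_k \\ 1}$ together with $\lambda_k\geq0$. Passing to the limit, continuity of the linear map $\lambda\mapsto\bmat{V \\ \1^\top}\lambda$ yields $\bmat{V \\ \1^\top}\lambda=\bmat{p \\ 1}$, while closedness of the nonnegative orthant gives $\lambda\geq0$. Hence $\lambda\in\Lambda(p)$, so $(p,\lambda)\in\mathrm{gr}(\Lambda)$, proving the graph closed.

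To convert closedness of the graph into upper semi-continuity I would argue by contradiction using compactness of $[0,1]^n$. Suppose $\Lambda$ fails to be upper semi-continuous at some $p\in P$. Then there exist an open set $U\supseteq\Lambda(p)$, a sequence $p_k\to p$, and points $\lambda_k\in\Lambda(p_k)\setminus U$. As the $\lambda_k$ all lie in the compact cube $[0,1]^n$, after passing to a subsequence we may assume $\lambda_k\to\lambda^\ast$. Closedness of the graph forces $\lambda^\ast\in\Lambda(p)\subseteq U$; on the other hand, each $\lambda_k$ lies in the closed set $[0,1]^n\setminus U$, so $\lambda^\ast\notin U$, a contradiction. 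This establishes upper semi-continuity at every $p\in P$.

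The only delicate point is the equivalence between closed graph and upper semi-continuity: closedness of the graph alone does not in general imply upper semi-continuity, and one must invoke local boundedness (or compactness of the range) to rule out escape to infinity. Here this hypothesis is automatic because $\Lambda$ maps into the unit cube, so the subsequence extraction in the contradiction argument is guaranteed. Once the compactness of the range is in place, no further estimates are needed.
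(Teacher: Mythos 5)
Your proof is correct and follows essentially the same route as the paper: reduce upper semi-continuity to closedness of the graph using compactness of the range $[0,1]^n$, then verify closedness by passing to the limit in the defining linear equalities and the nonnegativity constraint. The only differences are cosmetic --- you prove the closed-graph-plus-compact-range implication by hand (the paper cites it) and you explicitly track the constraints $\1^\top\lambda=1$ and $\lambda\geq0$ in the limit, which the paper's proof leaves implicit.
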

\begin{proof}
Since $P$ is closed and $[0,1]^n$ is compact, we only need to prove that $\mathrm{Graph}(\Lambda)$ is closed (see \cite{aubinFrankowska2009set}). Letting $(p,\lambda)\in\overline{\mathrm{Graph}(\Lambda)}$, it follows that there exist $\{p_k\}\subseteq P,\{\lambda_k\}\subseteq[0,1]$ such that
\[
\lim_{k\to\infty}p_k=p,\,\,\lim_{k\to\infty}\lambda_k=\lambda,
\]
and
\[
\lambda_k\in\Lambda(p_k),\,\,k\in\N.
\]
Therefore $V\lambda_k=p_k$ for all $k\in\N$ and so, taking the limit of both sides, $V\lambda=p$, implying $\lambda\in\Lambda(p)$. It then holds that $\mathrm{Graph}(\Lambda)$ is closed and thus $\Lambda$ is upper semi-continuous.
\end{proof}

\begin{prop}\label{prop:LambdaLSC}
$\Lambda:P\rightrightarrows[0,1]^n$ is lower semi-continuous.
\end{prop}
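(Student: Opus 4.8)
The plan is to establish lower semi-continuity by showing that for every $p \in P$, every $\lambda \in \Lambda(p)$, and every sequence $p_k \to p$ in $P$, there exists a sequence $\lambda_k \in \Lambda(p_k)$ with $\lambda_k \to \lambda$. This is the standard sequential characterization of lower semi-continuity (see \cite{aubinFrankowska2009set}), and it is the natural dual to the graph-closedness argument used for upper semi-continuity in Proposition \ref{prop:LambdaUSC}.

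**First** I would exploit the explicit structure $\Lambda(p) = \tau(p) + N\Gamma(p)$ together with the vertex description from Theorem \ref{thm:LambdaConvVertices}. The key analytical fact is that the simplicial barycentric coordinates $\sigma^i(p)$ are smooth in $p$ wherever they are defined, since each solves the nonsingular linear system \eqref{eq:VISigma} and the solution of an invertible linear system depends smoothly (indeed rationally) on the right-hand side. This smoothness was already flagged in the discussion preceding this section. I would then write a fixed target $\lambda \in \Lambda(p)$ as a convex combination $\lambda = \sum_{i \in I(p)} \alpha_i \sigma^i(p)$ with $\alpha_i \geq 0$ and $\sum \alpha_i = 1$, and attempt to transport these same weights to nearby points by setting $\lambda_k \udef \sum_{i \in I(p)} \alpha_i \sigma^i(p_k)$, so that continuity of each $\sigma^i$ yields $\lambda_k \to \lambda$.

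**The main obstacle** is feasibility: although $\sigma^i(p_k)$ converges to the feasible vector $\sigma^i(p) \geq 0$, for finite $k$ some component $\sigma^i_j(p_k)$ could be slightly negative, so the candidate $\lambda_k$ need not lie in $\Lambda(p_k)$. This is exactly the phenomenon noted after the examples, where $I(p)$ varies with $p$ and vertices collide on the boundaries $\partial S_i$. To handle this I would argue that for $p$ in the interior of the relevant simplicial cell the strict inequalities $\sigma^i_j(p) > 0$ for $j \notin \mathcal{I}_i$ persist on a neighborhood, giving feasibility of $\lambda_k$ for $k$ large; the boundary case, where some $\sigma^i(p)$ has a zero component, requires that any index dropping out of $I(p_k)$ be compensated, using the observation that on $\partial S_i$ one has $\sigma^i(p) = \sigma^j(p)$ for a neighboring $j$, so the convex combination can be rewritten in terms of vertices that remain feasible along the approaching sequence.

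**An alternative** and perhaps cleaner route would bypass the combinatorial bookkeeping entirely by invoking a general stability result for parametric polyhedra: since $\Lambda(p)$ is the solution set of the linear system \eqref{eq:gbc} with $\lambda \geq 0$, and the constraint matrix $\smallbmat{V \\ \1^\top}$ is fixed and full rank with only the right-hand side $\smallbmat{p \\ 1}$ varying continuously, the Hoffman bound on systems of linear inequalities guarantees a Lipschitz-type estimate $d_H(\Lambda(p), \Lambda(p')) \leq L\,\lVert p - p' \rVert$ for the Hausdorff distance between the nonempty solution sets. Such an estimate immediately yields lower semi-continuity, and in fact full continuity. I expect the Hoffman-bound approach to be the safest to carry through rigorously, while the explicit vertex-transport argument is more illuminating of the underlying geometry; I would present the Hoffman argument and remark that it also recovers upper semi-continuity uniformly.
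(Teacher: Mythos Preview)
Your proposal is sound and takes a genuinely different route from the paper. The paper's argument is very brief: it fixes $\lambda\in\Lambda(p)$, selects \emph{some} $\lambda_k\in\Lambda(p_k)$, notes that $V\lambda_k\to V\lambda$, and then asserts $\lambda_k\to\lambda$ by appealing to ``$V$ full column rank.'' Since $V\in\R^{d\times n}$ with $n>d$ and $\ker\smat{V\\\1^\top}$ is $(n{-}d{-}1)$-dimensional by hypothesis, that rank claim does not hold as stated, and an arbitrary selection $\lambda_k$ need not converge to the prescribed $\lambda$. Your two strategies both sidestep this issue. The vertex-transport construction ties $\lambda_k$ explicitly to $\lambda$ through fixed convex weights $\alpha_i$ and the smooth simplicial maps $\sigma^i(\cdot)$ from \eqref{eq:VISigma}; you correctly flag the feasibility obstacle on the boundaries $\partial S_i$ and indicate how the coincidence $\sigma^i=\sigma^j$ there lets one rewrite the combination in terms of indices that remain feasible. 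The Hoffman-bound route is cleaner still: with the constraint matrix fixed and only the right-hand side varying affinely in $p$, Lipschitz continuity of the nonempty solution set in Hausdorff distance yields lower semi-continuity immediately and in fact subsumes Proposition~\ref{prop:LambdaUSC}. Both of your approaches are more robust than what the paper presents; the Hoffman argument is shorter and self-contained, while the vertex-transport argument stays closer to the paper's geometric language and makes direct use of Theorem~\ref{thm:LambdaConvVertices}.
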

\begin{proof}
Let $p\in P$ and $\{p_k\}\in P$ such that $p_k\to p$ as $k\to\infty$; let also $\lambda\in\Lambda(p)$. Therefore, for each $k\in\N$ there exists $\lambda_k\in[0,1]^n$ such that $p_k=V\lambda_k$ and $\1^\top\lambda_k=1$. Thus
\[
\lim_{k\to\infty}V\lambda_k=V\lambda,
\]
and so, since $V$ is full column rank, it follows from standard arguments that $\{\lambda_k\}$ converges and
\[
\lim_{k\to\infty}\lambda_k=\lambda.
\]
Since $\lambda_k\in\Lambda(p_k)$ by construction, the claim is proved.
\end{proof}
It is a direct consequence of definition that $\Gamma(p)$ is also upper and lower semi-continuous. We then have the following.
\begin{cor}
$\Lambda:P\rightrightarrows[0,1]^n$ and $\Gamma:P\rightrightarrows\R^{n-d-1}$ are continuous set-valued maps.
\end{cor}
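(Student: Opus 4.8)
The plan is to invoke the standard characterization that a set-valued map is continuous precisely when it is simultaneously upper and lower semi-continuous (see \cite{aubinFrankowska2009set}). Since both of these properties have just been established for $\Lambda$, the continuity of $\Lambda$ should follow by merely combining Propositions \ref{prop:LambdaUSC} and \ref{prop:LambdaLSC}. This is the entire content of the assertion for $\Lambda$, so essentially no additional work is required there beyond citing the two preceding results.

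For $\Gamma$, I would transfer the semi-continuity of $\Lambda$ through the affine relation $\Lambda(p)=\tau(p)+N\Gamma(p)$. Because $N$ has full column rank $n-d-1$ (its columns form a basis of $\ker\bmat{V \\ \1^\top}$ by \eqref{eq:N}), it admits a left inverse $N^{+}$, and the single-valued map $\lambda\mapsto N^{+}(\lambda-\tau(p))$ recovers $c$ uniquely. Together with the smoothness assumption on $\tau(p)$ recorded at the end of the previous section, this yields $\Gamma(p)=N^{+}(\Lambda(p)-\tau(p))$, exhibiting $\Gamma$ as the composition of the continuous set-valued map $\Lambda$ with continuously varying affine single-valued maps. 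Since pre- and post-composition of a semi-continuous set-valued map with a continuous single-valued map preserves both upper and lower semi-continuity, $\Gamma$ inherits both properties and is therefore continuous as well.

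Concretely, I would first fix the continuous selection $p\mapsto\tau(p)$, justified by choosing $\tau$ among the simplicial barycentric coordinates, which are smooth and feasible on each simplex as already noted; then verify the two transfer lemmas for semi-continuity under composition with continuous maps. The only point deserving care is precisely the role of $\tau(p)$: one must ensure a genuinely continuous selection exists so that the change of variables between $\Gamma(p)$ and $\Lambda(p)$ varies continuously with $p$. Given that this has been secured, I do not anticipate any real obstacle, and the corollary follows immediately.
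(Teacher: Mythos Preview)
Your proposal is correct and follows essentially the same route as the paper: the paper's proof simply cites Propositions~\ref{prop:LambdaUSC} and~\ref{prop:LambdaLSC} together with the definition of $\Gamma$, having already remarked just before the corollary that upper and lower semi-continuity of $\Gamma$ follow directly from those of $\Lambda$ via that definition. Your use of the left inverse $N^{+}$ merely makes explicit the mechanism behind the paper's one-line appeal to the relation $\Lambda(p)=\tau(p)+N\Gamma(p)$.
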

\begin{proof}
It is a straightforward consequence of Propositions \ref{prop:LambdaUSC}, \ref{prop:LambdaLSC} and definition of $\Gamma$.
\end{proof}

We want to focus now on smoothness features of $\Lambda(p)$. In particular, we are going to prove that it is semidifferentiable in $P$. We recall here some useful definitions for the next results (see \cite{Penot1984,DENTCHEVA1998} for further details). \\
Let $X$ be a normed vector space and $F:X\rightrightarrows\R^n$ be a set-valued map with nonempty images.
\begin{defn}
The set-valued map $F:X\rightrightarrows\R^n$ is called \emph{semidifferentiable} at a point $(\overline x,\overline y)\in\mathrm{Graph}F$ if the limit
\[
DF(\overline x,\overline y;h_0)\udef\lim_{t_n\to0^+,\,h_n\to h_0}t_n^{-1}\left[F(\overline x+t_nh_n)-\overline y\right]
\]
exists for all $h_0\in X$ in the sense of Kuratowski-Painlev\'{e} (see \cite{aubinFrankowska2009set}).
\end{defn}
\begin{thm}\label{thm:semidiff}
$\Lambda:P\rightrightarrows[0,1]^n$ is semidifferentiable at every point $(p,\lambda(p))\in\mathrm{Graph}F$ where $\lambda(p)$ is differentiable.
\end{thm}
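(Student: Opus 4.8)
The plan is to reduce semidifferentiability to an explicit Kuratowski--Painlev\'e limit of polyhedral sets and then to compute that limit directly, exploiting the fact that $\mathrm{Graph}(\Lambda)$ is a fixed polyhedron. Writing $A\udef\bmat{V \\ \1^\top}$, I first observe that a vector $k\in\R^n$ lies in $t^{-1}[\Lambda(p+th)-\lambda(p)]$ exactly when $\lambda(p)+tk\in\Lambda(p+th)$; since $\lambda(p)$ already satisfies $V\lambda(p)=p$ and $\1^\top\lambda(p)=1$, dividing the constraints by $t>0$ collapses them to $Vk=h$, $\1^\top k=0$ and $\lambda(p)+tk\geq0$. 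Thus the difference quotient is the polyhedron
\[
S_{t,h}\udef\{k\in\R^n\,:\,Vk=h,\ \1^\top k=0,\ \lambda(p)+tk\geq0\}.
\]
Letting $J_0\udef\{j\,:\,\lambda_j(p)=0\}$ be the active index set at $\lambda(p)$, my candidate for the semiderivative is the slice of the tangent cone to the graph,
\[
K(h_0)\udef\{k\in\R^n\,:\,Vk=h_0,\ \1^\top k=0,\ k_j\geq0\ \text{for all }j\in J_0\},
\]
and the whole proof consists in showing $\lim_{t\to0^+,\,h\to h_0}S_{t,h}=K(h_0)$ in the Kuratowski--Painlev\'e sense.

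For the outer limit I would argue by straightforward passage to the limit: if $k_m\in S_{t_m,h_m}$ with $t_m\to0^+$, $h_m\to h_0$ and $k_m\to k$, then $Vk=h_0$ and $\1^\top k=0$ follow by continuity, while for $j\in J_0$ the constraint $t_mk_{m,j}\geq0$ forces $k_{m,j}\geq0$ and hence $k_j\geq0$. This gives $\limsup_{t,h}S_{t,h}\subseteq K(h_0)$ with essentially no work.

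The inner limit is the crux, and the hard part is to approximate an arbitrary $k\in K(h_0)$ by points of $S_{t_m,h_m}$ while meeting the equality $Vk_m=h_m$ exactly and the sign constraints on $J_0$ --- the naive correction needed to turn $Vk=h_0$ into $Vk=h_m$ may destroy $k_j\geq0$ at indices of $J_0$ where $k_j=0$. The observation that resolves this is that, rewriting the sign constraints as $-k_j\leq\lambda_j(p)/t$, the matrix defining $S_{t,h}$ (the rows of $V$, the row $\1^\top$, and the rows $-e_j^\top$) does not depend on $t$ or $h$: only the right-hand side does. Hence Hoffman's error bound for polyhedral systems applies with a single constant $L$ depending only on this fixed matrix. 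Evaluating the residual of a fixed $k\in K(h_0)$ against $S_{t_m,h_m}$, the equalities contribute only $\|Vk-h_m\|=\|h_0-h_m\|$, the constraints $k_j\geq0$ for $j\in J_0$ hold exactly, and the constraints $-k_j\leq\lambda_j(p)/t_m$ for $j\notin J_0$ become inactive once $t_m$ is small because $\lambda_j(p)>0$. Therefore $d(k,S_{t_m,h_m})\leq L\,\|h_0-h_m\|\to0$, placing $k$ in $\liminf_{t,h}S_{t,h}$ and yielding $K(h_0)\subseteq\liminf_{t,h}S_{t,h}$.

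Combining the two inclusions gives $\liminf_{t,h}S_{t,h}=\limsup_{t,h}S_{t,h}=K(h_0)$, so the limit exists and $\Lambda$ is semidifferentiable at $(p,\lambda(p))$ with $D\Lambda(p,\lambda(p);h_0)=K(h_0)$. I would finally record the role of the hypothesis that $\lambda$ be differentiable: differentiating $V\lambda(q)=q$ and $\1^\top\lambda(q)=1$ gives $VD\lambda(p)=I_d$ and $\1^\top D\lambda(p)=0$, so $D\lambda(p)h_0\in K(h_0)$ is a canonical element of the semiderivative; alternatively, feeding the decomposition $\Lambda=\lambda+N\Gamma$ into the difference quotient lets one compute the limit of $t^{-1}\Gamma(p+th)$ from the first-order expansion of $\lambda$, recovering $K(h_0)=D\lambda(p)h_0+N\,\tilde\Gamma(h_0)$ as an independent check. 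The single genuine obstacle is the uniformity of the Hoffman constant as $t\to0^+$, which is exactly what the $t$-independence of the constraint matrix secures.
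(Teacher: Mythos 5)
Your proposal is correct and in fact proves more than the stated theorem, but by a genuinely different route from the paper. The paper's proof keeps the decomposition $\Lambda=\lambda+N\Gamma$, invokes Penot's criterion to reduce semidifferentiability to nonemptiness of the contingent derivative, and verifies the $\liminf$-distance condition \eqref{eq:liminf} for the single candidate $v=J\lambda(p)h$, using that $0\in\Gamma(q)$ for every $q$. You instead write the difference quotient $t^{-1}[\Lambda(p+th)-\lambda(p)]$ as a polyhedron $S_{t,h}$ whose constraint matrix is independent of $(t,h)$, and compute the full Kuratowski--Painlev\'e limit: the outer inclusion by passing to the limit in the constraints, the inner one via Hoffman's error bound with a constant uniform in $(t,h)$. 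This buys you an explicit formula $D\Lambda(p,\lambda(p);h_0)=K(h_0)$ (the $h_0$-slice of the tangent cone to the polyhedral graph) rather than mere nonemptiness, and --- as you essentially note at the end --- your argument never actually uses differentiability of the selection $\lambda$, so it yields semidifferentiability at \emph{every} point of $\mathrm{Graph}(\Lambda)$, a strictly stronger conclusion. It also sidesteps a delicate step in the paper's proof, namely $B_\varepsilon(0)\subseteq t_n^{-1}N\Gamma(p+t_nh_n)$, which cannot hold for a full ball of $\R^n$ since $N\Gamma(\cdot)$ is contained in the $(n-d-1)$-dimensional subspace $\ker\bmat{V \\ \1^\top}$; your Hoffman argument replaces exactly this point. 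One caveat, which you share with the paper: Hoffman's bound requires $S_{t_m,h_m}\neq\emptyset$, i.e.\ $p+t_mh_m\in P$, so at boundary points $p$ the limit should be restricted to admissible directions $h_0$ (the paper's ``let $h\in P$'' leaves the same issue unaddressed); it would be worth stating this restriction explicitly when you invoke the error bound.
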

\begin{proof}
Let $p\in P$ and $h\in P$ be given. It is known \cite{Penot1984} that in order to prove that $\Lambda$ is semidifferentiable we need to prove that $D\Lambda(p,\lambda(p);h)$ is nonempty, which requires that there exists $v\in\R^n$ such that
\begin{equation}\label{eq:liminf}
\liminf_{(t_n,h_n)\to(0^+,h)}t_n^{-1}d(\lambda(p)+t_nv,\Lambda(p+t_nh_n))=0,
\end{equation}
where $d(a_0,A)\udef\inf_{a\in A}\|a_0-a\|$. By definition, \eqref{eq:liminf} is equivalent to
\[
\liminf_{(t_n,h_n)\to(0^+,h)}t_n^{-1}d(\lambda(p)+t_nv,\lambda(p+t_nh_n)+N\Gamma(p+t_nh_n))=0,
\]
and thus
\[
\liminf_{(t_n,h_n)\to(0^+,h)}t_n^{-1}d(\lambda(p)-\lambda(p+t_nh_n)+t_nv,N\Gamma(p+t_nh_n))=0.
\]
Using homogeneity and continuity of the distance function and since $\lambda(p)$ is differentiable, we have to prove that
\begin{equation}\label{eq:liminf_d}
\liminf_{(t_n,h_n)\to(0^+,h)}d(-J\lambda(p)h_n+v,t_n^{-1}N\Gamma(p+t_nh_n))=0.
\end{equation}
Since, from Proposition \ref{prop:polytopes}, $\Gamma(p+t_nh_n)$ is a bounded polytope containing the origin for all $n\in\N$, then $0_{\R^n}\in t_n^{-1}N\Gamma(p+t_nh_n)$ for all $n\in\N$. Thus, for any given $\varepsilon>0$ sufficiently small we have
\[
B_\varepsilon(0)\subseteq t_n^{-1}N\Gamma(p+t_nh_n).
\]
Let now $J\lambda(p)$ be the Jacobian of $\lambda$ at $p$. Since
\[
\lim_{n\to\infty}J\lambda(p)h_n=J\lambda(p)h,
\]
then there exists $\overline{n}\in\N$ such that for all $n\geq\overline{n}$ it holds $\|J\lambda(p)h_n-J\lambda(p)h\|\leq\varepsilon$. This implies that
\[
J\lambda(p)h-J\lambda(p)h_n\in B_\varepsilon(0)\subseteq t_n^{-1}N\Gamma(p+t_nh_n)
\]
for all $n\geq\overline{n}$, providing that \eqref{eq:liminf_d} is satisfied for $v\udef J\lambda(p)h$, and this proves the claim.
\end{proof}


\section{Conclusions}

In this paper we have considered the set-valued map of generalized barycentric coordinates $\Lambda(p)$ relative to a $d$-polytope $P$ with $n>d$ vertices, and we have proved that, for any given point $p\in P$, values of $\Lambda(p)$ define a polytope of dimension at most $n-d-1$, whose vertices are represented by $n-d$ nonnegative simplicial barycentric coordinates relative to $p$. This result seems to be a dual transform of a convex polytope $P\subseteq\R^d$, in the sense that as every point $p\in P$ can be described by a convex combination of the $n$ vertices of $P$, so each of the barycentric coordinates relative to $p$ can be described by a convex combination of $n-d$ simplicial barycentric coordinates in $\R^n$. \\
Moreover, we have proved that the these maps are continuous and semidifferentiable at points where the corresponding generalized barycentric coordinate is differentiable.

\section*{Acknowledgments}
FVD is part of INdAM research group GNCS. He also gratefully thanks Prof. L.~Dieci and Prof. N.~Sukumar for their support while writing this paper.

\bibliographystyle{plain}
\bibliography{BibliografiaFabio.bib}

\end{document}